\DeclareMathOperator*{\argmin}{arg\,min}
\numberwithin{equation}{section}
\theoremstyle{plain}
\newtheorem{theorem}{Theorem}[section]
\newtheorem{lemma}[theorem]{Lemma}
\newtheorem{corollary}[theorem]{Corollary}
\newtheorem{proposition}[theorem]{Proposition}
\theoremstyle{definition}
\newtheorem{example}[theorem]{Example}
\newtheorem{case[theorem]}{Case}
\theoremstyle{remark}
\newtheorem{remark}[theorem]{Remark}
\numberwithin{equation}{section}
\begin{document}

 \title{Non-convex optimization via strongly convex majoirziation-minimization}

%  \author{Baoqing Ding} 
 
  %\author{Nick Feng} 

 \author{Azita Mayeli} 
 %\address{Department of Mathematics, The City University of New York \\ The Graduate Center and Queensborough, New York, NY \\ 10016, USA}
%\email{amayeli@gc.cuny.edu}

 %\author{Ivan Selesnick} 
 %\address{Department of 
%Electrical and Computer Engineering \\ Tandon School of Engineering \\ NYU \\ 
%2 Metrotech Center \\
%Brooklyn, 
%New York 11201, USA }
 %\email{selesi@nyu.edu}

 %Azita Mayeli 
 %\thanks{The research for this paper was supported by the }
%\footnotesize\texttt{{mayeli@ma.tum.de}}
%\thanks{}
%  }
  
   \date{\today}  
   
 \begin{abstract} In this paper, we  introduce a  class of nonsmooth nonconvex least square  optimization problem using convex analysis tools and 
 we    propose to use the  iterative minimization-majorization (MM) algorithm  on a convex set with initializer away from the origin to find an optimal point for the optimization problem. For this,  first we use an approach to construct a class 
 of convex majorizers  which approximate the value of  non-convex cost function on a convex set.  The convergence of the  iterative  algorithm is guaranteed when the initial point $x^{(0)}$ is away from the origin and the iterative points $x^{(k)}$ are obtained in a ball centred at $x^{(k-1)}$ with small radius.   The algorithm converges to a stationary point of cost function when the surregators are strongly convex. For the class of our  optimization problems, the proposed penalizer of the cost function is 
  the   difference of $\ell_1$-norm and  the Moreau envelope of a convex function, and     it is a generalization of  GMC non-separable penalty  function  previously   introduced by Ivan Selesnick in \cite{IS17}. 
 \end{abstract} 
  
  \maketitle
  
 {\bf Keywords:}  Cost function, local majorizer and minimizer, surregator,  Moreau envelope, infimal convolution,  convex function, stationary point
 
 \vskip.124in 
 
 {\bf MSC2010:} 65K05, 65K10, 26B25, 90C26, 	90C30

\section{introduction} 
 Consider the following optimization problem 
 \begin{align}\label{opt problem}
 \min_{x\in {\mathcal C}} F(x)
 \end{align}
 where ${\mathcal C}$ is a closed convex subset of $\Bbb R^N$ and $F: \Bbb R^N\to \Bbb R$ is  a real valued  objective or cost function.  In general $F$ is   continuous   but not  convex nor smooth. Most optimization problems rely heavily on convexity condition of the function $F$ and the lack of convexity  for $F$   makes it usually an NP hard problem to find a global minimum point for the optimization problem (\ref{opt problem}). 
  The convexity condition is in  particular  useful in some practical problems such as in 
image reconstruction and sparse recovery. 
In the absence of the convexity condition, majorization-minimization  (MM)  algorithm has been proved to be a useful tool  in finding local minimization vectors or signals. 
  This algorithm  is an iterative algorithm and it converts a difficult optimization problem  into a simple one, as we will demonstrate some cases in this paper.  % For example, this can be done by substituting a non-convex complicated objective  function $F$ by a simple convex function, and then approximating the minimization point of $F$ at each iteration step. 

   \vskip.124in 
The goal of this paper is for given $y\in \Bbb R^M$ to   solve the   following class of   least square problems 
\begin{align}\label{objective function}
\argmin_{x\in \Bbb R^N} F(x), \quad   \text{where} \ 
 \ 
F(x) = \cfrac{1}{2}\|y-Ax\|_2^2 + \lambda(\|x\|_1 - f_\alpha(x)), \quad \forall  x\in\Bbb R^N, 
\end{align}
using an iterative algorithm. Here, 
 $f_\alpha$   is the Moreau envelope of a  convex function   as  defined in (\ref{Mourea envelop}),  and $\alpha>0, \lambda>0$ are constants and predetermined.  $A\in \Bbb R^M\times\Bbb R^N$ is a low rank  wide matrix (e.g.  a finite frame or  wavelet).  In our setting, 
 we use tools from convex analysis to introduce the new class of non-convex penalties: \begin{align}\label{penalty functions}
 \psi_\lambda(x)=\lambda(\|x\|_1 - f_\alpha(x)).
 \end{align}

The optimization problem (\ref{objective function}) is a nonconvex nonsmooth optimization  problems subject to the  penalty $\psi_\lambda(x)$. 
The cost function  $F$ given by   (\ref{objective function}) is in general  nonconvex nonsmooth. However, the convexity can hold under some conditions depending on 
   $A$, $\lambda$ and $\alpha$.
   Note that  the  main idea  of using such nonconvex penalty functions is to promote the sparsity of the solutions in (\ref{objective function}).  A  non-convex penalty can induce a nonconvex cost function, thus  unnecessary  suboptimal local minimizers for the cost function. The main goal of the present paper is twofold. First we introduce a class of  functions which majorize the cost function locally.  Then we use these majorizers (surrogaters) in an MM algorithm to 
  solve the optimization problem (\ref{objective function}) and prove that  the iteration points 
   convergence  to the stationary point of the objective function under some sufficient condition.   
  Before we explain the main contributions of the current work in details, let us first recall  some  known and special cases of  (\ref{objective function}).

 \vskip.124in 
 
 {\bf Special cases.} When 
  $\lambda=0$, the problem is alternately referred to
as minimizer of the residual sum of squared errors (RSS). The solution for minimization can be obtained by least square method. In this case, the  minimization is  continuously differentiable unconstrained convex optimization
problem. For a solution of this case, see e.g. 
(\cite{Hastie-Tibshi-Friedman01}).  
    When   $f_\alpha$ is a constant function (which happens in our case, for example,    when $\alpha=0$),   the problem turns into the classical $\ell_1$ regularizer case. 
This case among the cases with   convex regularizer (or penalty term) is more effective in inducing sparse solutions for (\ref{opt problem}) and (\ref{objective function})  (\cite{BED09}). However, the $\ell_1$ regularizer  underestimates the high amplitude components  of the solution. The least square problem with an $\ell_1$ penalty is known 
as {\it Least Absolute Selection and Shrinkage Operator}
(LASSO)  (\cite{Tibshi_LAsso}) and {\it Basis Pursuit Denoising} (\cite{Donoho99}),   respectively. Several methods have been introduced in \cite{Tibshi_LAsso,Donoho99} for optimizing the problem.  
   When $f(x)= \|x\|_1$, the  Moreau envelope  $f_\alpha$ is the well-known Huber function.  The Huber function   and its general form as  regulizers of sparse recovery problems  have been treated in \cite{IS17}, and it has been proved that  with these regularizers, using proximal algorithms,  the problem  (\ref{objective function})  has an optimal solution provided that $F$ is convex.    In this case, the penalty term 
   (\ref{penalty functions}) is called GMC penalty. 
   % % and the  minimization is done using 
%proximal algorithms comprising simple computations.
 
\vskip.124in

 {\bf Main contribution.} 
 The first contribution of this paper is  to construct a class of convex  functions which      majorize  (surrogate)   the cost  function   $F$  (\ref{objective function}) locally.  We obtain these  functions by  constructing  local  minimizers for the penalty term $\psi_\lambda$. The  local majorizers are tangent to the cost function only at one  point and each has a global minimum.  The  existence of a global minimum for the majorizers is obtained by convexity of majorizers, which we also study here.  %Each minimum point defines the iteration point for the next step in the algorithm.  order to obtain a minimum for the majorizers in any convex set, we obtain a sufficient condition for which the majorizers are convex.
%  Next we show that the directional derivative of the majorizers at the touching points are equal. 

 The second contribution  of this paper is to use the MM algorithm  to find  a sequence of    iteration  points  which converges to the local minimum of the cost function (\ref{objective function}). In this algorithm,  the initial point $x^{(0)}$  is taken away from zero and each   iteration point $x^{(k)}$ is obtained by local minimization  of surregator function $F^M(\cdot, x^{(k-1)})$  in some small neighbourhood of $x^{(k-1)}$.  We prove that the sequence $\{x^{(k)}\}_k$  has an accumulation point   and it is a  stationary point for the cost function $F$  in (\ref{objective function}), provided that  the majorizers are $a$-strongly convex.

\vskip.124in

 {\bf Outline.}  The paper is organized as follows. After introducing some notations and preliminaries in Section \ref{pre-note}, in Section \ref{Auxiliary results}  we introduce a class of minimizer functions for the penalty term (\ref{penalty functions}) and obtain majorizers for the cost function $F$ (\ref{objective function}). In this section, we also study sufficient   conditions for the majorizers to be  convex.  These results are collected in  
  %that for a fixed point $w$, the functions  $F^M(\cdot, w)$ and $F^m(\cdot,w)$ are local majorizer and minimizers of $F$ with equal directional derivative at $w$. The  result are proved 
Lemma  \ref{Majorizer and Minorizer of S-alpha}  and 
    Theorem \ref{local majorants for $F$}.  
  In Section \ref{iteration points} we  propose to use the iterative  MM algorithm with initial point away from zero  to guarantee  the  convergence of iteration points  to a  stationary point of $F$.  These results are collected in Proposition  
\ref{sequence converges}, Theorem \ref{stationary point}   and Corollary \ref{main theorem}.     %In Section \ref{iteration points} we  also discuss initial  points which induce a convergent subsequence to a stationary point. 
  
\subsection{Related work}
The current paper is proposing the use of majorization-minimization (MM) algorithm  to solve the class of nonconvex nonsmooth optimization problems of type (\ref{objective function}). This approach has been used for example in \cite{Figueir07,LangeChi14,LMSS16} for solving some optimization problems different than what we consider here. There are another types of methods that have been proved effective in solving nonconvex problems. For example, 
iteratively reweighted least squares (IRLS) method (\cite{DaubechiesDeVor10}) and iteratively reweighted $\ell_1$ (IRL1) (\cite{Candes08}). For a list of other methods, we refer the reader to see \cite{LMSS16} and the reference therein. 
 \subsection{Acknowledgement} The author wishes to thank Prof. Ivan Selesnick for several   insightful  discussions and for introducing her the MM algorithm. 
 
 \section{preliminaries and notations}\label{pre-note}

  \begin{figure}[t]
 \centering
  %\vspace{-140pt}
 \includegraphics[scale=1]{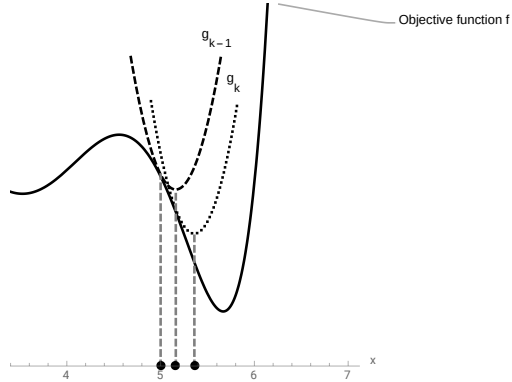}
% \includegraphics[width=2in]{MM1}
% \vspace{-50pt}
 \caption{\small{The MM algorithm procedure. The points represent the iteration points $x^{(k-1)}, x^{(k)}, x^{(k+1)}$.}} 
 \label{fig:MM algorithm}
 \end{figure}

 For any vector $x\in \Bbb R^N$, the $\ell_1$ and $\ell_2$ norms of $x$ are defined by 
 $\|x\|_1= \sum_i|x_i|$ and  $\|x\|_2^2= \sum_i|x_i|^2$.  By $A\in\Bbb R^M\times \Bbb R^N$ we denote the matrix of dimension $M\times N$ and we say   it  is semindefinite positive and denote it by $A\succeq 0$ if for all $x\in\Bbb R^N$,  $\langle Ax,x\rangle\geq 0$. Here, $\langle \ , \ \rangle$ denotes the  inner product of two vectors. The positive definite is also equivalent to say that all eigenvalues of $A$ are non-negative. 
 \vskip.124in 
  
  {\it Local majorizers and minimizers:}   Given a fixed point $w\in \Bbb R^N$, a function $g(\cdot, w): \Bbb R^N\to \Bbb R$ is called a local  majorizer of an objective function $f:\Bbb R^N\to \Bbb R$ at $w$ if the following conditions hold:
   \begin{align}\label{defn-majorizer}
   f(x)&\leq  g(x,w) \quad \forall x\in \Bbb R^N, \\\notag
   f(x) &= g(x,w) \quad  \text{if and only if } x=w .
   \end{align} 
   
   The functions $f$ and $g(\cdot, w)$ are tangent at the point $w$ when $f$ and $g(\cdot, w)$ both have directional  derivatives at $w$ and for any direction $d\in \Bbb R^N$ with small $ \|d\|_2$, 
$$\nabla g(w;d,w)= \nabla f_{\alpha}(w;d) .$$ 

From the  point of view of geometry, a majorizer means that the surface obtained by the map $x\mapsto  g(x,w)$ lies above the surface generated by  $x\mapsto f(x)$ and these two surfaces are touching and have tangent point only at $x=w$. 

We say a function $g(\cdot, w): \Bbb R^N\to \Bbb R$ minorizes the function $f$ at $w$
when $-g(\cdot, w): \Bbb R^N\to \Bbb R$ majorizes $-f$ at $w$. 
 \vskip.124in

  {\it Iterative MM algorithm.}   In minimization algorithm, we choose the majorizer $g_{k-1}:=g(\cdot,x^{(k-1)})$ tangent to objective function at $x^{(k-1)}$ and minimize it on a convex set $\mathcal D$ to obtain next iteration point 
    $x^{(k)}$.   That is $x^{(k)}:= \argmin_{x\in \mathcal D} g(x,x^{(k-1)})$, provided that  $x^{(k)}$ exists. We define $g_k:=g(\cdot,x^{(k)})$ (see Figure ~\ref{fig:MM algorithm}).
    
      When the minmization points $x^{(k)}$ exists, the following descending property holds: 
  \begin{align}\label{descenting properties} 
  f(x^{(k)})\leq g_{k-1}(x^{(k)})=g(x^{(k)},x^{(k-1)})\leq g_{k-1}(x^{(k-1)})=g( x^{(k-1)},x^{(k-1)})=f(x^{(k-1)}). 
  \end{align}
 
 One of the  significant properties of the MM algorithm is its stability due to the descending property of the objective function $f$ (\ref{descenting properties}). 
 If an objective function is strictly convex, then
the MM algorithm will converge to the unique optimal point (global minimum),
assuming that it exists. In the absence of  convexity, all stationary points are isolated, then the MM algorithm will converge to one of them. 
 For a complete philosophy of the MM algorithm, we refer the reader, for example,   to \cite{Lang-opt,LangeChi14}. 
 
 \vskip.124in 
 
 {\it Moreau envelope.} 
 For a function  $\tilde f:\Bbb R^N\to \Bbb R$ and  $\alpha>0$,  the Moreau envelope of the function $\tilde f$ is denoted by $f_\alpha$ and is  defined by    infimal convolution
  \begin{align}\label{Mourea envelop}  f_\alpha(x) := \left(\tilde f\small\Box \cfrac{\alpha}{2}\|\cdot\|_2^2 \right)(x)= \inf_{v\in \Bbb R^N} \{\tilde f(v)+\cfrac{\alpha}{2}\|v-x\|_2^2\},\quad  \forall x\in\Bbb R^N .  
   \end{align}
   The function $f_\alpha$ is convex when $f$ is convex and it is 
 the infimal convolution of the  function  $f$ and the map $x\mapsto \cfrac{\alpha}{2}\|x\|_2^2$. 
   For example, when $\tilde f(x) = \|x\|_1$,  the Moreau envelope  $f_\alpha: \Bbb R^N\to \Bbb R\cup \{\infty\}$ is well-known (generalized)  Hubber function. 
       For the definition of infimal convolution and its other properties  see,  e.g.,  \cite{BauscheCombettesBook}.  
       
        \vskip.124in 
        
     Let $y\in \Bbb R^M$ be an observed vector data and     $A\in \Bbb R^M\times \Bbb R^N$  be a matrix, which is usually a wide low rank matrix. 
    The following result for cost functions $F$  given in   (\ref{objective function}) with penalty term  $\psi_\lambda (x)= 
    \lambda(\|x\|_1 - f_\alpha(x))$   is 
 a mild improvement of   Theorem 1 in \cite{IS17}. In  \cite{IS17},  $\psi_\lambda $ is the GMC penalty and the Moreau envelope  $f_\alpha$ is the  generalized Huber function. 
 
\begin{theorem} The function $F$ is (strictly) convex if  the convexity condition
\begin{align}\label{convexity condition1} 
A^TA-\lambda \alpha I \succeq 0,
 \end{align}
 holds.  For  strictly convex the inequality $\succeq 0$ is replaced by $\succ 0$.  Here,  $I$ is the identity matrix.
 \end{theorem}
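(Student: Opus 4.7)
The plan is to reduce the convexity of $F$ to the convexity of a single quadratic form by exploiting a well-known identity about the Moreau envelope: if $\tilde f$ is convex, then the function
\begin{equation*}
x \mapsto \frac{\alpha}{2}\|x\|_2^2 - f_\alpha(x)
\end{equation*}
is convex on $\Bbb R^N$. I would first establish (or cite) this identity. The quickest derivation is to expand $\|v-x\|_2^2 = \|v\|_2^2 - 2\langle v,x\rangle + \|x\|_2^2$ inside the infimum defining $f_\alpha$ and rearrange:
\begin{equation*}
\frac{\alpha}{2}\|x\|_2^2 - f_\alpha(x) = \sup_{v\in\Bbb R^N}\Bigl\{\alpha\langle v,x\rangle - \tilde f(v) - \frac{\alpha}{2}\|v\|_2^2\Bigr\}.
\end{equation*}
The right-hand side is a pointwise supremum of affine functions of $x$, hence convex.

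Next, I would substitute this decomposition into the definition of $F$. Writing $-\lambda f_\alpha(x) = -\tfrac{\lambda\alpha}{2}\|x\|_2^2 + \lambda\bigl(\tfrac{\alpha}{2}\|x\|_2^2 - f_\alpha(x)\bigr)$ and collecting quadratic terms,
\begin{equation*}
F(x) = \tfrac{1}{2}\langle (A^T A - \lambda\alpha I)x, x\rangle - \langle A^T y, x\rangle + \tfrac{1}{2}\|y\|_2^2 + \lambda\|x\|_1 + \lambda\bigl(\tfrac{\alpha}{2}\|x\|_2^2 - f_\alpha(x)\bigr).
\end{equation*}
The last three terms are convex: $\lambda\|x\|_1$ is a norm, and the bracketed term is convex by the previous paragraph; affine terms are convex. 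So $F$ is a sum of a pure quadratic with Hessian $A^TA - \lambda\alpha I$ and a convex function.

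From here the proof is immediate. A quadratic form is convex if and only if its Hessian is positive semidefinite, and strictly convex if and only if it is positive definite. Under the hypothesis $A^TA - \lambda\alpha I \succeq 0$ the quadratic part is convex, so $F$ is convex as a sum of convex functions; under $A^TA - \lambda\alpha I \succ 0$ the quadratic is strictly convex, and adding the (at least) convex remainder yields strict convexity of $F$.

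I do not expect any serious obstacle. The only subtle step is the identity for $\tfrac{\alpha}{2}\|x\|_2^2 - f_\alpha(x)$; if one wants to keep the proof self-contained one could instead invoke the Fenchel–Legendre characterization $f_\alpha = (\tilde f^* + \tfrac{1}{2\alpha}\|\cdot\|_2^2)^*$ together with the fact that convex conjugates are convex, but the direct supremum argument above is shorter and avoids introducing $\tilde f^*$.
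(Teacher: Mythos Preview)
Your proposal is correct and follows essentially the same approach that the paper intends: the paper does not spell out a proof here but refers to the technique of Theorem~1 in \cite{IS17}, which it reproduces later in proving Theorem~\ref{local majorants for $F$}(ii). That technique is exactly yours---expand the squares, rewrite $-f_\alpha$ so that the non-quadratic remainder becomes a pointwise supremum of affine functions in $x$ (hence convex), and reduce the question to positive semidefiniteness of the collected quadratic form $A^TA-\lambda\alpha I$. Your isolation of the identity $\tfrac{\alpha}{2}\|x\|_2^2 - f_\alpha(x) = \sup_v\{\alpha\langle v,x\rangle - \tilde f(v) - \tfrac{\alpha}{2}\|v\|_2^2\}$ makes the structure more transparent than the paper's later expansion, but the underlying argument is the same.
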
 
 The proof of this theorem can be obtained by a similar technique which was used to   prove Theorem 1 in \cite{IS17}. 
 Note that the condition (\ref{convexity condition1})  ensures the  uniqueness of the minimizer of cost function $F$.   
        \vskip.124in
        
 The convexity condition (\ref{convexity condition1})  indicates  that  all eigenvalues of  matrix $A^TA$ must be  at least $\lambda \alpha$. 
 In the absence of convexity condition   (\ref{convexity condition1}), the function $F$ is  as sum of a concave function with a convex function and  may not be convex, and therefore it  may not have any local  minimum. In this case,  one needs an approach   to prove the existence of  a global minimum or global optimum point for $F$.  This paper proposes the  use of  MM algorithm technique to  solve the minimization problem for $F$, when $F$ is nonconvex.  
  \vskip.124in

  To reach our goal and prove the existence of a local minimizer for nonconvex  objective (or cost)  function $F$ (\ref{objective function}),  we first  construct local minimizers  for the  Moreau envelope $f_\alpha$,  and then use them to obtain local majorizers for $F$.
    Let $\gamma_m>0$ be a constant to be determined later.   
    %For any    $w\in \Bbb R^N$ %,  we can write 
   %
% \begin{align}\label{defn of maj and min}
 %f_\alpha(x)\geq  f_{\alpha}(x)-\gamma_m \|x-w\|_2^2.   %\leq f_{\alpha}(x)+\gamma_M \|x-w\|_2^2. 
% \end{align} 
For any    $w\in \Bbb R^N$, let %$f_{\alpha}^m(\cdot, w)$  as 
 \begin{align}\label{defn of maj and min1}f_{\alpha}^m(x, w):= f_{\alpha}(x)-\gamma_m \|x-w\|_2^2  . 
 \end{align}
We define   $F^M(\cdot,w)$ by replacing $f_\alpha$ by $f_{\alpha}^m(x, w)$ in the definition of the objective function $F$ (\ref{objective function}) as follows: 
 \begin{align}\label{majorizers}F^M(x,w): = \cfrac{1}{2} \|y-Ax\|_2^2 + \lambda\left(\|x\|_1- f_{\alpha}^m(x,w)\right). 
 \end{align}
 It is obvious that for all $x\in\Bbb R^N$, $F^M(x,w)\geq F(x)$. In Theorem \ref{local majorants for $F$} we prove that the surface generated by the function $F^M(\cdot, w)$ is lying about  the surface generated by the function $F$ and they touch only at one  point $x=w$.  
 %and 
 
  %$$F^m(x,w): = \cfrac{1}{2} \|y-Ax\|_2^2 + \lambda\left(\|x\|_1- f_{\alpha}^M(x,w)\right)
 %$$
\begin{remark} 
In the same fashion,  one can define minorizers $F^m(x,w)$ for $F$. For this, let $\gamma_M>0$ and define 
 $f_{\alpha}^M(x, w):= f_{\alpha}(x)+\gamma_M \|x-w\|_2^2$. Then $f_{\alpha}^M(x, w)\geq f_\alpha(x)$ for all $x$.   Define 
 $$F^m(x,w): = \cfrac{1}{2} \|y-Ax\|_2^2 + \lambda\left(\|x\|_1- f_{\alpha}^M(x,w)\right). 
 $$
 With a similar techniques of proofs for majorizers in the rest of the present paper, one can obtain local
  minorizers for the cost function $F$ with tangential point at $w$.  This is a useful tool in finding local maximums of an optimization problem. 
\end{remark} 
 
    \section{Construction of a local majorizer  for   cost function}\label{Auxiliary results} 
   Our first result in this section  proves the existence of local   minorizers for the Moearu envelope function 
  $f_\alpha$. 
   \begin{lemma}[Minorizer of $f_\alpha$]\label{Majorizer and Minorizer of S-alpha} Fix $w\in \Bbb R^n$   and define    
  $f_{\alpha}^m(\cdot,w)$ as  in (\ref{defn of maj and min1}).  $f_{\alpha}^m(\cdot,w)$ is a minorizer for  $f_\alpha$, and  for any   direction $d\in \Bbb R^N$ with $\|d\|_2$ small, we have 
 
 \begin{align}\label{tangential property}\nabla f_{\alpha}(w;d)=\nabla f_{\alpha}^m(w;d,w).   
 \end{align} 
  
 \end{lemma}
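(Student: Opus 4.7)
The plan is to verify directly the two minorizer conditions (which, recall, are just the majorizer conditions from \eqref{defn-majorizer} applied to $-f_\alpha$ and $-f_\alpha^m(\cdot,w)$) and then compute the directional derivative of the perturbation term to check tangency. No deep convex analysis is required: the entire argument boils down to the positivity of the quadratic perturbation $\gamma_m\|x-w\|_2^2$ and the fact that this perturbation has a critical point at $x=w$.

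First I would unwind the definition. By construction
\[
f_\alpha(x) - f_\alpha^m(x,w) = \gamma_m\|x-w\|_2^2 \geq 0 \qquad \forall x\in\mathbb{R}^N,
\]
so $f_\alpha^m(x,w)\leq f_\alpha(x)$ for every $x$, with equality iff $\|x-w\|_2=0$, i.e.\ iff $x=w$, using $\gamma_m>0$. This verifies the two conditions required of a minorizer at $w$.

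Next I would establish the tangency identity \eqref{tangential property}. Since $\tilde f$ is convex, the Moreau envelope $f_\alpha$ is continuously (Fréchet) differentiable on $\mathbb{R}^N$; in particular its directional derivative at $w$ in direction $d$ exists and equals $\nabla f_\alpha(w;d)=\langle \nabla f_\alpha(w),d\rangle$. For the perturbed function, write
\[
f_\alpha^m(w+td,w) = f_\alpha(w+td) - \gamma_m t^2\|d\|_2^2,
\]
subtract $f_\alpha^m(w,w)=f_\alpha(w)$, divide by $t>0$, and let $t\to 0^+$; the quadratic term contributes $-\gamma_m t\|d\|_2^2\to 0$, so the limit equals $\nabla f_\alpha(w;d)$, as claimed.

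The only mild subtlety is ensuring that $f_\alpha$ itself admits a directional derivative at $w$ in every direction $d$; this is where one invokes the standard fact that the Moreau envelope of a proper convex function is $C^1$ with $\alpha$-Lipschitz gradient. Everything else is algebra on the quadratic correction, so I do not anticipate any real obstacle.
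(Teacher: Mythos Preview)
Your proof is correct and follows essentially the same route as the paper: verify the minorizer inequality directly from $f_\alpha(x)-f_\alpha^m(x,w)=\gamma_m\|x-w\|_2^2\ge 0$, then compute the directional derivative at $w$ and observe that the quadratic perturbation contributes a term $\gamma_m t\|d\|_2^2\to 0$. The only minor difference is that the paper works with a $\liminf$ definition of the directional derivative throughout (so it does not need to invoke the $C^1$ smoothness of the Moreau envelope), whereas you appeal to that smoothness to guarantee existence of the ordinary limit; either way the computation is identical.
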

 
 \begin{proof} The  proof of the  local miniorizers for $f_\alpha$ is obtained directly from the definition of $f^m(\cdot, w)$. To prove  (\ref{tangential property}), let $d\in \Bbb R^N$ with $\|d\|_2$ small. Then

\begin{align} \nabla f_\alpha^m(w;d,w)& =   \liminf_{\theta\to 0^+} \frac{f_{\alpha}^m(w+\theta d,w)- f_{\alpha}^m(w,w)}{\theta} \\\notag
& = \liminf_{\theta\to 0^+} \frac{f_{\alpha}^m(w+\theta d,w)- f_{\alpha}(w)}{\theta} \\\notag
&= \liminf_{\theta\to 0^+} \frac{\left(f_{\alpha}(w+\theta d)-\gamma_m \| \theta d\|_2^2 \right)
- f_\alpha(w)}{\theta}   \\\notag
&= \liminf_{\theta\to 0^+} \frac{\left(f_{\alpha}(w+\theta d) 
- f_\alpha(w)\right) -\gamma_m \| \theta d\|_2^2}{\theta}   \\\notag
&=  \liminf_{\theta\to 0^+} \frac{\left(f_{\alpha}(w+\theta d) 
- f_\alpha(w)\right)  }{\theta}   - \liminf_{\theta\to 0^+} \gamma_m \theta \|d\|_2^2  \\\notag
&= \liminf_{\theta\to 0^+} \frac{\left(f_{\alpha}(w+\theta d) 
- f_\alpha(w)\right)  }{\theta}  \\\notag
&= \nabla  f_\alpha(w;d). 
\end{align}
 This completes the proof of the  theorem. 
 \end{proof} 
 
% {\bf Remark 1.} The results of the previous theorem also holds for any constants  $\gamma_m$ and $\gamma_M$ and any fixed point $w$ instead of $x^{(k)}$.   \\ 
  
  % {\bf Remark 2.} The Huber function $S_\alpha$ in the previous theorem can be replaced by any function and the results will still hold true. \\ 
   
Our next result illustrates that  the  local  minorizers  of the Moreau envelope function $f_\alpha$ induce local majorizers for 
  $F$.     
 
 \begin{theorem}\label{local majorants for $F$}
The function $F^M(\cdot,w)$ (\ref{majorizers})  is  local  majorizer   for the cost function $F$ at $w$, and we have 
 \begin{itemize} 
  \item[(i)] $\nabla F(w;d)=\nabla F^m(w;d,w) $ for all $d$  with $\|d\|_2$ sufficiently small. 
  \item[(ii)] $F^M(\cdot,w)$ is (strictly) convex if 
\begin{align}\label{convexity condition} 
A^TA + \lambda (2\gamma_m -\alpha) I \succeq 0. 
\end{align} 
  %and accordingly, the  function $F_k^m$ is convex if 
  %$$A^TA + (2\lambda \gamma_M -\alpha) I \succeq 0$$ 
  \end{itemize}
 \end{theorem}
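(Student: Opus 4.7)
The plan is to dispatch the three assertions separately, exploiting the pointwise identity
\[
F^M(x,w) - F(x) = -\lambda\bigl(f_\alpha^m(x,w) - f_\alpha(x)\bigr) = \lambda\gamma_m\,\|x-w\|_2^2,
\]
which follows immediately from the definitions of $F$, $F^M$ and $f_\alpha^m$. First I would verify the majorizer conditions (\ref{defn-majorizer}): since $\lambda,\gamma_m>0$, the right-hand side is nonnegative and vanishes if and only if $x=w$, so $F^M(\cdot,w)\geq F$ globally with tangency only at the base point.

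For part (i), I would exploit linearity of the one-sided directional derivative: $F$ and $F^M(\cdot,w)$ differ only through replacing $-\lambda f_\alpha(x)$ by $-\lambda f_\alpha^m(x,w)$, while the smooth residual $\tfrac{1}{2}\|y-Ax\|_2^2$ and the $\ell_1$ term contribute identical directional derivatives at $w$ in both cost functions. Applying Lemma \ref{Majorizer and Minorizer of S-alpha} to the remaining piece and summing the three contributions yields the desired equality $\nabla F(w;d) = \nabla F^M(w;d,w)$.

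For (ii), the strategy is to rewrite $F^M(\cdot,w)$ as a sum of two convex pieces plus a single quadratic form whose Hessian is exactly the matrix appearing in (\ref{convexity condition}). The key step is the classical Moreau-envelope decomposition
\[
f_\alpha(x) = \tfrac{\alpha}{2}\|x\|_2^2 - h(x), \qquad h(x) := \sup_{v\in\Bbb R^N}\Bigl\{\alpha\langle v,x\rangle - \tilde f(v) - \tfrac{\alpha}{2}\|v\|_2^2\Bigr\},
\]
obtained by completing the square inside the infimal convolution (\ref{Mourea envelop}); the function $h$ is convex as a pointwise supremum of affine functions of $x$. Substituting into (\ref{majorizers}) and collecting the terms that are quadratic, linear, and constant in $x$ produces
\[
F^M(x,w) = \lambda\|x\|_1 + \lambda h(x) + \tfrac{1}{2}\bigl\langle x,\bigl(A^TA + \lambda(2\gamma_m-\alpha)I\bigr)x\bigr\rangle + L_w(x),
\]
where $L_w(\cdot)$ is affine in $x$. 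The first two summands are convex, hence $F^M(\cdot,w)$ is (strictly) convex exactly when the matrix $A^TA + \lambda(2\gamma_m-\alpha)I$ is positive semidefinite (respectively positive definite), which is condition (\ref{convexity condition}).

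The step I anticipate as the main obstacle is justifying the envelope decomposition used in (ii). Although the identity is standard in convex analysis, it implicitly requires $\tilde f$ to be proper, lower semicontinuous, and convex so that the supremum defining $h$ is well-posed and yields a convex function; such regularity on the base function of (\ref{Mourea envelop}) is not explicitly spelled out in the setup and would need to be added as a standing assumption. Once the decomposition is in hand, the majorizer property and item (i) follow directly from the identity displayed above, and (ii) reduces to a single positive-semidefiniteness check on the stated Hessian.
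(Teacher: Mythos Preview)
Your proposal is correct and follows essentially the same route as the paper. For the majorizer property and (i) you exploit the identity $F^M(\cdot,w)-F=\lambda\gamma_m\|\cdot-w\|_2^2$ together with Lemma~\ref{Majorizer and Minorizer of S-alpha}, which is exactly what the paper does; for (ii) both you and the paper expand the infimal convolution to isolate the quadratic form $\tfrac{1}{2}x^T\bigl(A^TA+\lambda(2\gamma_m-\alpha)I\bigr)x$ plus a residual that is convex as a pointwise supremum of affine functions in $x$. One minor remark: your concern about needing $\tilde f$ proper, lower semicontinuous and convex for the decomposition is overstated, since $h(x)=\sup_v\{\alpha\langle v,x\rangle-\tilde f(v)-\tfrac{\alpha}{2}\|v\|_2^2\}$ is convex as a supremum of affine functions regardless of any regularity on $\tilde f$, and its finiteness follows directly from the assumed finiteness of $f_\alpha$ via $h=\tfrac{\alpha}{2}\|\cdot\|_2^2-f_\alpha$.
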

 
 \begin{proof} By Proposition \ref{Majorizer and Minorizer of S-alpha} it is immediate that  the function $F^M(\cdot, w)$ is a local majorizer for $F$. The item (i) also holds by (\ref{tangential property}).  To prove the item (ii), 
we will adapt an approach used   to prove  Theorem 1 in  \cite{IS17}. 

Notice  the discrepancy with respect to the data in the surregator function
   $F^M(\cdot, w)$ can be written as 
    \begin{align}\label{expansion}
    F^M(x,w) = x^T\left(\frac{1}{2}A^TA+\lambda(\gamma_m  -\frac{\alpha}{2})I\right)x +\lambda\|x\|_1+ \max_{v\in \Bbb R^N} g(v,x,w).  
    \end{align}
    
Notice   
the function $Q(x):= \max_{v\in \Bbb R^N} g(v,x,w)$ is not affine  although for any 
 fixed $(v,w)$, the  map $x\to g(v,x,w)$ is an  affine (or linear) function.  However, the convexity of $Q$ can be obtained as a result of Proposition
8.14 in \cite{BauscheCombettesBook}, since $Q$ 
   is  pointwise maximum of convex functions. Therefore by  (\ref{expansion}),    $F^M(\cdot,w)$ is  convex when the quadratic part is convex. This means when the  matrix $A^TA+\lambda(2\gamma_m  -\alpha)I$ is  positive definite and this completes the proof of (ii).  The majorizer function is strictly convex when the inequality is strict. 
 \end{proof}

% \begin{example} The constant $\alpha$ plays the roll of    convexity controller  for the penalty function is given.   When $\alpha$ is to determine along the constant $\gamma_m$, then one must choose these constants such that 
 
 %$$2\gamma_m -\alpha\geq  \frac{\kappa_0^2}{\lambda}.$$
 %It is trivial that there are infinity many solutions for the pairs $(\gamma_m,\alpha)$ satisfying the preceding inequalities. 
% For example, if $0\in \Lambda$, the line $y=2x$ crossing the origin is a subset of solution set for the pairs $(x,y)=(\gamma_m,\alpha)$. 
%  \end{example}
  
 \section{MM algorithm and stationary points}\label{iteration points}
 
 In this section,   we prove the existence of a sequence of iteration points which are obtained by minimizing    surregator functions at each iteration step. 
 Under   strongly convexity  condition for the surregators we show that 
  the  iteration points have a convergent   subsequence  and the limit point is a   stationary point of $F$.  The stationary point is local minimum for $F$ by  the descending property (\ref{descenting properties}).  To prove the existence of the sequence, we continue as follows. First we introduce  a notation. For $\epsilon>0$ and 
 $u\in \Bbb R^N$, we denote by $B_\epsilon(u)$  the ball of radius $\epsilon$ with respect to the $\ell_2$ norm with center $u$. That is,   the set of all  points $x\in \Bbb R^N$ with $\ell_2$ norm distance    from   the center $u$ less than $\epsilon$. 
 
 \begin{proposition}\label{sequence converges}  Let $\alpha>0$ and 
   $\epsilon>0$. 
     Then the sequence obtained by the following iterative algorithm converges. 
     
     \begin{algorithm}[H]
 \SetAlgoLined
%\KwResult{Write here the result }
 Set $\gamma_m$  such that  the  convexity condition (\ref{convexity condition}) holds\;
Initialize 
    $x^{(0)}\in \Bbb R^N$ such that  $\|x^{(0)}\|_2>2 \epsilon$\;
 \For{$k=0, \cdots ,$}{
  $x^{(k+1)}=\argmin_{x\in B_{\frac{\varepsilon}{2^k}}(x^{(k)})} F^M(x,x^{(k)})$\;
 }
 %\caption{How to write algorithms}
\end{algorithm}
     where $k$ is the iteration counter. 
 \end{proposition}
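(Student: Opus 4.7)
The plan is to prove that $\{x^{(k)}\}$ is a Cauchy sequence in $\mathbb{R}^N$, from which convergence follows by completeness. The key observation is that the radii $\varepsilon/2^k$ shrink geometrically, so the iterates cannot move much from one step to the next, and the cumulative displacement is summable.

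First I would address well-definedness of the algorithm. At each step we minimize $F^M(\cdot, x^{(k)})$ over $B_{\varepsilon/2^k}(x^{(k)})$. Since $\gamma_m$ is chosen so that the convexity condition (\ref{convexity condition}) holds, Theorem~\ref{local majorants for $F$}(ii) ensures that $F^M(\cdot, x^{(k)})$ is convex (and in fact strictly convex under the strict version of the condition). Interpreting $B_{\varepsilon/2^k}(x^{(k)})$ as its closure (a compact convex set), a continuous convex function attains its minimum there, and this minimum is unique under strict convexity; so $x^{(k+1)}$ exists and the algorithm is well-posed.

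Next, the convergence argument. By construction, for every $k \geq 0$,
\[
\|x^{(k+1)} - x^{(k)}\|_2 \leq \frac{\varepsilon}{2^k}.
\]
Hence, for any integers $n > m \geq 0$, the triangle inequality gives
\[
\|x^{(n)} - x^{(m)}\|_2 \leq \sum_{k=m}^{n-1} \|x^{(k+1)} - x^{(k)}\|_2 \leq \sum_{k=m}^{n-1} \frac{\varepsilon}{2^k} < \frac{\varepsilon}{2^{m-1}},
\]
which tends to $0$ as $m \to \infty$. Thus $\{x^{(k)}\}$ is Cauchy in $\mathbb{R}^N$, and by completeness converges to some limit $x^\ast \in \mathbb{R}^N$. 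Letting $n \to \infty$ with $m = 0$ yields $\|x^\ast - x^{(0)}\|_2 \leq 2\varepsilon$, and combined with the hypothesis $\|x^{(0)}\|_2 > 2\varepsilon$ this gives
\[
\|x^\ast\|_2 \geq \|x^{(0)}\|_2 - \|x^\ast - x^{(0)}\|_2 > 0,
\]
so the limit is bounded away from the origin. This is exactly what will be needed in the subsequent results, since the nonsmoothness of the penalty $\psi_\lambda$ is located precisely at the origin.

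The only real subtlety is the well-definedness of each iterate: strictly interpreted, the \emph{open} ball may not contain a minimizer if the unconstrained minimum of the strictly convex surrogate lies outside it, in which case the infimum is attained on the boundary and one must adopt the closed-ball convention. Once this is resolved, the convergence proof itself is essentially routine: it is a direct consequence of the geometric shrinking of step sizes built into the algorithm together with completeness of $\mathbb{R}^N$, and does not use either the convexity of the majorizer beyond well-posedness or the descent property (\ref{descenting properties}). The deeper role of strong convexity of $F^M$ appears only in the subsequent Theorem~\ref{stationary point}, where the limit $x^\ast$ is identified as a stationary point of $F$.
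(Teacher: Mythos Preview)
Your argument is correct and in fact cleaner than the paper's. Both proofs start from the same observation, namely $\|x^{(k+1)}-x^{(k)}\|_2\le \varepsilon/2^k$, but then diverge: you sum the increments to show directly that $\{x^{(k)}\}$ is Cauchy and invoke completeness of $\mathbb{R}^N$, whereas the paper first argues boundedness, extracts a convergent subsequence via Bolzano--Weierstrass, and then shows that the full sequence shares the same accumulation point. Your route is shorter and avoids the detour through subsequences; the paper's route buys nothing extra here, since the summability of the step bounds already forces Cauchy without any appeal to compactness. You also address well-posedness of each iterate and flag the open/closed ball issue, which the paper leaves implicit. Your conclusion $\|x^\ast-x^{(0)}\|_2\le 2\varepsilon$, hence $\|x^\ast\|_2>0$, matches the intent of the paper's lower bound on $\|x^{(k)}\|_2$ (the paper writes $\|x^{(0)}\|_2-\varepsilon$ rather than $\|x^{(0)}\|_2-2\varepsilon$, but this does not affect the argument given the hypothesis $\|x^{(0)}\|_2>2\varepsilon$).
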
 
     \begin{proof} To prove the  proposition, first we claim that 
the sequence $\{ x^{(k)}\}_k$ has a convergent subsequence. Then we show that the sequence $\{ x^{(k)}\}_k$  is that   subsequence. \\ 

 {\it (Boundedness)} 
The  iteration points $x^{(k)}$ satisfy   
 \begin{align}\label{difference of iteration points}
 \|x^{(k+1)} -x^{(k)}\|_2\leq  \frac{\varepsilon}{2^k}, \ \  \forall k\geq 0.  
 \end{align} 
 Using this, iteratively one can show that 
  for any $k$ 
\begin{align}\label{some ineq}
0<\|x^{(0)}\|_2 - \varepsilon \leq \|x^{(k)}\|_2.  
\end{align}
  
  Indeed, the the left side of (\ref{some ineq}) is a positive constant since 
  the initial point $x^{(0)}$ is chosen such that  $0<\varepsilon < \frac{ \|x^{(0)}\|_2}{2}$.     From the other hand,  the relation (\ref{difference of iteration points}) implies that the sequence is also bounded above.   Therefore,  by The Bolzano-Weierstrass Theorem the 
 $\{ x^{(k)}\}_k$ has a convergent subsequence with accumulation point $x^\ast$. In what follows we prove that the sequence  $\{ x^{(k)}\}_k$ converges to  
   $x^\ast$.  \\

  {\it (Convergence)} 
 Assume $\{x^{(k_n)}\}_n$ be a subsequence of  $\{ x^{(k)}\}_k$ such that $x^{(k_n)}\to x^*$ as $k_n\to \infty$. Fix $k$ and let $k_n>k$. An easy calculation shows that 
  $$\|x^{(k)}-x^*\| \leq \|x^{(k_n)}-x^\ast\|  
  + \mathcal{O}(\frac{\epsilon}{2^{k_n}})\quad \text{as} \ k\to \infty .$$ 
  %Since the sequence is bounded from below, the accumulation point is a limit point of a subsequence of $\{ x^{(k)}\}_k$ and t
This implies that $x^\ast$ is the accumulation point for  $\{ x^{(k)}\}_k$ and we are done.   
  \end{proof} 
  
  Notice the limit point may not be  a stationary    (local minimum) point.   However, this can be   obtained under some sufficient assumptions on the majorizers.  
First we have a lemma. 
  %\begin{definition}[a-strongly convex]  ???? 
  %\end{definition} 
%  The $a$-strongly convexity condition is equivalent to say that 
 %all eigenvalues of the matrix  $A^T A + \lambda(2\gamma_m- \alpha)I$ must be at least $a$.     

 \begin{lemma} Let $a>0$ and $w\in\Bbb R^N$. The local majorizer 
 $F^M(\cdot,w)$ is $a$-strongly convex provided   that  $A^T A + \lambda(\gamma_m- \alpha)I\succeq 2a I$. 
 \end{lemma}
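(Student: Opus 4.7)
The plan is to follow the strategy of Theorem \ref{local majorants for $F$}(ii), namely the expansion (\ref{expansion}), and combine it with the standard characterization that a function $h$ is $a$-strongly convex if and only if $h(x) - \tfrac{a}{2}\|x\|_2^2$ is convex.

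First, I would apply this characterization and rewrite, using (\ref{expansion}),
\[
F^M(x,w) - \tfrac{a}{2}\|x\|_2^2 = x^T\!\left(\tfrac{1}{2}A^TA + \lambda\bigl(\gamma_m - \tfrac{\alpha}{2}\bigr)I - \tfrac{a}{2}I\right)\!x + \lambda\|x\|_1 + \max_{v\in\Bbb R^N} g(v,x,w).
\]
So the question becomes whether the right-hand side is convex in $x$.

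Next, exactly as in the proof of Theorem \ref{local majorants for $F$}, the term $\lambda\|x\|_1$ is convex, and $x\mapsto \max_{v}g(v,x,w)$ is convex as a pointwise supremum of affine functions (Proposition 8.14 of \cite{BauscheCombettesBook}). It therefore suffices to show the quadratic form is convex, equivalently that the symmetric matrix $A^TA + \lambda(2\gamma_m - \alpha)I - aI$ is positive semidefinite.

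Finally, I would verify this from the hypothesis. Writing
\[
A^TA + \lambda(2\gamma_m - \alpha)I \;=\; \bigl(A^TA + \lambda(\gamma_m - \alpha)I\bigr) + \lambda\gamma_m I,
\]
applying the assumption $A^TA + \lambda(\gamma_m - \alpha)I \succeq 2aI$, and using $\lambda\gamma_m I \succeq 0$ (since $\lambda, \gamma_m > 0$), gives $A^TA + \lambda(2\gamma_m - \alpha)I \succeq 2aI + \lambda\gamma_m I \succeq aI$, as required. No single step is a real obstacle: the decomposition (\ref{expansion}) and the convexity of the pointwise maximum were already established in Theorem \ref{local majorants for $F$}, and the present lemma is essentially its quantitative refinement obtained by shifting the quadratic part by $-\tfrac{a}{2}I$.
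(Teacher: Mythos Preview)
Your proof is correct and follows essentially the same route as the paper: both invoke the expansion (\ref{expansion}), note that the $\ell_1$ term and the pointwise maximum $Q(x)=\max_v g(v,x,w)$ are convex, and reduce $a$-strong convexity to a semidefiniteness condition on the quadratic part. You are in fact slightly more careful than the paper, explicitly bridging the gap between the stated hypothesis $A^TA+\lambda(\gamma_m-\alpha)I\succeq 2aI$ and the needed matrix inequality via the extra $\lambda\gamma_m I\succeq 0$, whereas the paper simply records the condition $\tfrac{1}{2}A^TA+\lambda(\gamma_m-\tfrac{\alpha}{2})I\succeq aI$ and stops.
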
 
  
 \begin{proof}  Recall the discrepancy of data given in   (\ref{expansion})  
  $$F^M(x,w) = x^T\left(\frac{1}{2}A^TA+\lambda(\gamma_m  -\frac{\alpha}{2})I\right)x +\lambda\|x\|_1+ \max_{v\in \Bbb R^N} g(v,x,w) .$$
This representation implies that 
 $F_k^M$ is $a$-strongly convex when 
  \begin{align}\label{strong convexity condition}
  \frac{1}{2}A^TA+\lambda(\gamma_m  -\frac{\alpha}{2})I  \succeq al, 
  \end{align}
and we are done. 
   \end{proof}  
 
     Strong convexity is one of the most important tools in optimization and in particular    it guarantees  linear convergence rate of many gradient decent based algorithms. 
Here, we recall a result: 
  
   \begin{lemma}[\cite{Marial13}, Lemma B.5]\label{Marial13}
 Let $f$ be an $a$-strongly convex on a convex domain $\mathcal D$. Let $x^*$ be the minimizer of $f$ on $\mathcal D$. Then  
  $$a\|x-x^*\|_2^2\leq f(x)-f(x^*)  \quad \forall x\in \mathcal D .$$ 
 \end{lemma}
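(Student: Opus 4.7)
The plan is to derive the inequality by combining the defining property of $a$-strong convexity with the first-order optimality condition at $x^\ast$.

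First, I would record the inequality characterization of $a$-strong convexity in the convention used throughout this paper (that is, the one for which $F^M(\cdot,w)$ is $a$-strongly convex when its Hessian satisfies $A^TA + \lambda(2\gamma_m - \alpha)I \succeq 2aI$): for every $x,y\in\mathcal{D}$ and every subgradient $g\in\partial f(y)$,
\begin{align*}
f(x) \geq f(y) + \langle g, x - y\rangle + a\|x - y\|_2^2.
\end{align*}
If $f$ is differentiable the subgradient is replaced by $\nabla f(y)$; either way this is standard and follows by integrating a lower bound on the Hessian.

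Second, I would invoke Fermat's rule on the convex constraint set $\mathcal{D}$. Since $x^\ast$ minimizes $f$ on $\mathcal{D}$, there exists a subgradient $g^\ast\in\partial f(x^\ast)$ such that $\langle g^\ast, x - x^\ast\rangle \geq 0$ for every $x\in\mathcal{D}$. Plugging $y = x^\ast$ and this particular $g^\ast$ into the strong-convexity inequality yields
\begin{align*}
f(x) \geq f(x^\ast) + \langle g^\ast, x - x^\ast\rangle + a\|x - x^\ast\|_2^2 \geq f(x^\ast) + a\|x - x^\ast\|_2^2,
\end{align*}
which is exactly the desired estimate after rearrangement.

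The one delicate point, and the only one worth spelling out in detail, is the first-order optimality statement on a convex domain for a non-smooth $f$: subdifferentiability of $f$ on the interior of $\mathcal{D}$ plus convexity of $\mathcal{D}$ are what allow the existence of a subgradient $g^\ast$ satisfying the variational inequality. This matters in the paper's setting because the surrogate $F^M(\cdot,w)$ contains the non-smooth $\ell_1$ term, so the argument must be phrased in the language of subdifferentials rather than gradients. Apart from this, the proof is a two-line combination of the two ingredients above.
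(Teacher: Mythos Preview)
Your argument is correct and is the standard derivation of the quadratic-growth inequality from $a$-strong convexity plus the first-order optimality (Fermat) condition on a convex domain; your care with subgradients to cover the non-smooth $\ell_1$ term is appropriate. Note, however, that the paper does not supply its own proof of this lemma at all---it simply quotes the result from \cite{Marial13}, Lemma~B.5---so there is no in-paper argument to compare against.
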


 As an outcome of the  lemma we prove that the limit point $x^*$   in  Theorem \ref{sequence converges} is a stationary point (thus a local minimizer) for $F$: 
  
 \begin{theorem}\label{stationary point}  Assume that $a$-strongly convexity condition 
 (\ref{strong convexity condition}) hold, and 
 $\{x^{(k)}\}$ converges to   $x^\ast$. Then  $x^\ast$ is a stationary point for $F$ and we have $\nabla F(x^\ast;d)\geq 0$.
 \end{theorem}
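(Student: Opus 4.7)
The plan is to exploit the first-order optimality of $x^{(k+1)}$ as the minimizer of the $a$-strongly convex surrogate $F^M(\cdot, x^{(k)})$ on the closed ball $B_{\varepsilon/2^k}(x^{(k)})$, combine it with the tangency identity from Theorem \ref{local majorants for $F$}(i), and then pass to the limit $k\to\infty$ using Proposition \ref{sequence converges} (which gives $x^{(k)},x^{(k+1)}\to x^\ast$).

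First I would derive a per-step descent certificate. By the $a$-strong convexity of $F^M(\cdot,x^{(k)})$ and its convex expansion at the tangent point $x^{(k)}$, together with $F^M(x^{(k)},x^{(k)})=F(x^{(k)})$ and the tangency identity $\nabla F^M(x^{(k)};d,x^{(k)})=\nabla F(x^{(k)};d)$,
\[
F^M(x^{(k+1)},x^{(k)})\ \ge\ F(x^{(k)})+\nabla F\bigl(x^{(k)};x^{(k+1)}-x^{(k)}\bigr)+a\|x^{(k+1)}-x^{(k)}\|_2^2.
\]
Since $x^{(k)}\in B_{\varepsilon/2^k}(x^{(k)})$, the minimality of $x^{(k+1)}$ gives $F^M(x^{(k+1)},x^{(k)})\le F^M(x^{(k)},x^{(k)})=F(x^{(k)})$, and the majorizing property $F\le F^M$ gives $F(x^{(k+1)})\le F^M(x^{(k+1)},x^{(k)})$. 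Combining these,
\[
F(x^{(k)})-F(x^{(k+1)})\ \ge\ a\|x^{(k+1)}-x^{(k)}\|_2^2 \ge 0,
\]
so $\{F(x^{(k)})\}$ is monotone, converges by boundedness below, and $\sum_k\|x^{(k+1)}-x^{(k)}\|_2^2<\infty$; in particular $F(x^{(k)})-F(x^{(k+1)})\to 0$.

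For the stationarity claim, I fix an arbitrary direction $d\in\mathbb R^N$ and, for each $k$, pick $t_k>0$ with $t_k\|d\|_2\le\varepsilon/2^k$ so that $y_k:=x^{(k)}+t_k d$ lies in $B_{\varepsilon/2^k}(x^{(k)})$. The minimality of $x^{(k+1)}$ gives $F^M(x^{(k+1)},x^{(k)})\le F^M(y_k,x^{(k)})$. Subtracting $F^M(x^{(k)},x^{(k)})=F(x^{(k)})$ from both sides and dividing by $t_k$, the right-hand side is a forward difference quotient of $F^M(\cdot,x^{(k)})$ at the tangent point which, by Theorem \ref{local majorants for $F$}(i), tends to $\nabla F^M(x^{(k)};d,x^{(k)})=\nabla F(x^{(k)};d)$. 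Lemma \ref{Marial13} applied with $y=x^{(k)}$ and the strong convexity (\ref{strong convexity condition}) controls the left-hand side from above in terms of $F(x^{(k)})-F(x^{(k+1)})\to 0$. Passing $k\to\infty$, using continuity of $F^M$ in both arguments and the fact that $x^{(k)},x^{(k+1)}\to x^\ast$, produces the desired inequality $\nabla F(x^\ast;d)\ge 0$.

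The main obstacle is the rigorous limit interchange in the directional-derivative inequality: because of the $\ell_1$ term, $w\mapsto\nabla F(w;d)$ is generally only upper semi-continuous, and because the ball radius $\varepsilon/2^k$ shrinks geometrically, the constrained minimizer $x^{(k+1)}$ may sit on the boundary and the admissible step sizes $t_k$ tend to $0$. The strong-convexity estimate $a\|x^{(k+1)}-x^{(k)}\|_2^2\le F(x^{(k)})-F(x^{(k+1)})$ from Lemma \ref{Marial13} together with positive homogeneity of convex directional derivatives should supply enough regularity to justify the limit and complete the proof.
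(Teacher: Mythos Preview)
Your proposal takes a genuinely different route from the paper, and the obstacle you flag at the end is real and is not closed by what you have written.

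The paper never tracks directional derivatives along the sequence $\{x^{(k)}\}$. Instead it uses the explicit identity
\[
F^M(x,w)=F(x)+\lambda\gamma_m\|x-w\|_2^2,
\]
which follows immediately from the definition $f_\alpha^m(x,w)=f_\alpha(x)-\gamma_m\|x-w\|_2^2$. With this in hand, the paper applies Lemma~\ref{Marial13} to $F^M(\cdot,x^{(k)})$ at the minimizer $x^{(k+1)}$ to obtain, for each $x$,
\[
a\|x-x^{(k+1)}\|_2^2\ \le\ F^M(x,x^{(k)})-F^M(x^{(k+1)},x^{(k)})\ \le\ F^M(x,x^{(k)})-F(x^{(k+1)}),
\]
and then simply lets $k\to\infty$ at the level of \emph{function values}: the right side converges to $F(x)+\lambda\gamma_m\|x-x^\ast\|_2^2-F(x^\ast)$ by the identity above and continuity of $F$. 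This yields the pointwise lower bound
\[
F(x)-F(x^\ast)\ \ge\ (a-\lambda\gamma_m)\|x-x^\ast\|_2^2,
\]
and the directional derivative is taken \emph{once}, at $x^\ast$, by setting $x=x^\ast+\theta d$ and letting $\theta\downarrow 0$. No regularity of $w\mapsto\nabla F(w;d)$ is ever needed.

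By contrast, your argument hinges on comparing $F^M(x^{(k+1)},x^{(k)})$ with $F^M(x^{(k)}+t_k d,x^{(k)})$, subtracting $F(x^{(k)})$, and dividing by $t_k$. Because the ball radius is $\varepsilon/2^k$, you are forced to take $t_k\asymp 2^{-k}$, while the numerator on the left is of order $\|x^{(k+1)}-x^{(k)}\|$, which is also only bounded by $\varepsilon/2^k$; the quotient therefore need not tend to $0$. On the right side you need to pass from difference quotients at $x^{(k)}$ to $\nabla F(x^\ast;d)$; for the $\ell_1$ part the directional derivative is only upper semicontinuous in the base point, which gives the inequality in the wrong direction. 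Neither the descent certificate $F(x^{(k)})-F(x^{(k+1)})\to 0$ nor positive homogeneity rescues this, because the relevant ratio is between two quantities that decay at the same geometric rate.

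In short: the missing idea is to pass to the limit in function values using $F^M(x,x^{(k)})\to F(x)+\lambda\gamma_m\|x-x^\ast\|_2^2$, obtain a quadratic growth inequality at $x^\ast$, and differentiate only at the end. That replaces your entire second paragraph and removes the limit-interchange obstacle.
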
 
 \begin{proof} For all $x\in \Bbb R^N$, $f_{\alpha}^m(x,x^{(k)}) \to f_\alpha(x) -\gamma_m \|x-x^\ast\|_2^2$ as $k\to \infty$. Thus, $F_k^M(x) \to F(x) +\lambda \gamma_m \|x-x^\ast\|_2^2$,  $k\to \infty$. 
From the other side, by applying Lemma \ref{Marial13} to $F_k^M$ and using the majorization property  of this function we obtain  
  $$a\|x-x^{(k+1)}\|_2^2 
\leq F^M(x,x^{(k)})-F^M(x^{(k+1)},x^{(k)}) 
\leq F^M(x,x^{(k)})-F(x^{(k+1)}) \quad \forall x\in \Bbb R^N.$$
%(For convenience, we can take $\mathcal D$ a small neighbourhood of $x^\ast$.)
So, 
$$a\|x-x^{(k+1)}\|_2^2 
\leq  F^M(x,x^{(k)})-F(x^{(k+1)}) \quad \forall x\in \Bbb R^N.$$

By the continuity of  $F$,  
by letting   $k\to \infty$ in the preceding   inequality,   we obtain  
 
 $$a\|x-x^\ast\|_2^2 \leq F(x) + \lambda \gamma_m \|x-x^\ast\|_2^2 -F(x^\ast),$$ 
 or equivalently 
\begin{align}\label{some ineq}
  F(x)  -F(x^\ast) \geq   (a-\lambda \gamma_m) \|x-x^\ast\|_2^2.  
  \end{align}
  Let $d\in \Bbb R^N$ be a direction with 
  $\|d\|_2\leq \epsilon$  and  $\theta>0$. By (\ref{some ineq}),   
 
  $$ F(x^\ast +\theta d)  -F(x^\ast) \geq 
(a-\lambda \gamma_m)  \theta^2 \|d\|_2^2.$$
 
 This implies that 
 
 $$\nabla F(x^\ast;d)=\liminf_{\theta\to 0^+} \cfrac{F(x^\ast +\theta d)  -F(x^\ast)}{\theta} \geq 
(a-\lambda \gamma_m)   \|d\|_2^2 (\liminf_{\theta\to 0^+} \theta)   = 0, $$

and we are done.    
 \end{proof}

 The following result is a summary of the results presented in  this and previous sections. 
  
  \begin{corollary}[Convergence]\label{main theorem}    Assume that the local majorizers  $\{F^M(\cdot,x^{(k)})\}_k$  of $F$ are  $a$-strongly convex. The sequence  of iteration points  $\{x^{(k)}\}$ converges and the limit point is a  local minimizer of $F$. 
 \end{corollary}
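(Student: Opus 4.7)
The plan is to stitch together Proposition \ref{sequence converges} and Theorem \ref{stationary point} and then upgrade the resulting stationary point to a local minimum. No new machinery should be needed: the corollary is essentially a consolidation of the two results above it, but the upgrade step deserves a careful word.

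First I would invoke Proposition \ref{sequence converges}. The $a$-strong convexity hypothesis (\ref{strong convexity condition}) is strictly stronger than the plain convexity condition (\ref{convexity condition}) that the proposition asks for when setting $\gamma_m$, so its hypotheses are satisfied and the MM iterates $\{x^{(k)}\}$ converge to some limit point $x^\ast \in \Bbb R^N$. The initialization requirement $\|x^{(0)}\|_2 > 2\varepsilon$ moreover forces $\|x^\ast\|_2 \geq \|x^{(0)}\|_2 - \varepsilon > 0$, so the limit is bounded away from the origin. This settles the convergence half of the corollary.

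Next I would feed the same sequence into Theorem \ref{stationary point}. The strong-convexity assumption is exactly what that theorem requires, and it delivers $\nabla F(x^\ast; d) \geq 0$ for every direction $d$ of sufficiently small norm, i.e., $x^\ast$ is a first-order stationary point of $F$. To promote stationarity to local minimality I would re-use the quadratic bound
\begin{equation*}
F(x) - F(x^\ast) \geq (a - \lambda \gamma_m)\|x - x^\ast\|_2^2
\end{equation*}
that is extracted inside the proof of Theorem \ref{stationary point} by passing to the limit in the majorization inequality. Coupled with the MM descending property (\ref{descenting properties}), which already yields $F(x^\ast) \leq F(x^{(k)})$ for every $k$, this is what produces $F(x) \geq F(x^\ast)$ in a neighborhood of $x^\ast$.

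The main obstacle is precisely this last step. For a nonsmooth nonconvex $F$ the directional inequality $\nabla F(x^\ast;d) \geq 0$ is only a necessary, not a sufficient, condition for a local minimum, so the quadratic bound really has to do the work. It does so cleanly in the regime $a \geq \lambda\gamma_m$, where in fact $x^\ast$ becomes a global minimum; in the opposite regime $a < \lambda\gamma_m$ the bound degrades and one is forced to lean on the monotone decrease of $\{F(x^{(k)})\}$ together with continuity of $F$ to conclude $F(x^\ast)=\inf_k F(x^{(k)})$ and thereby local minimality along the closure of the iterate trajectory. I would make these two regimes explicit in the write-up, so that the statement ``limit point is a local minimizer of $F$'' is not left depending on an implicit tuning of $a$ versus $\lambda\gamma_m$.
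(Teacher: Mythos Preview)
Your proof follows the same two-step skeleton as the paper's own: convergence from Proposition~\ref{sequence converges}, then stationarity from Theorem~\ref{stationary point}. The paper's entire argument for the upgrade from stationary point to local minimizer is the single clause ``By the descending property~(\ref{descenting properties}), the stationary point is a local minimum,'' with no elaboration.

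You are right to scrutinize that last step. The descent property by itself only gives $F(x^\ast)\le F(x^{(k)})$ for every $k$, i.e.\ minimality over the iterate set, not over a full neighborhood of $x^\ast$; for a nonconvex nonsmooth $F$ the implication ``stationary $+$ descent $\Rightarrow$ local minimum'' is not automatic. Your alternative, recycling the quadratic bound $F(x)-F(x^\ast)\ge(a-\lambda\gamma_m)\|x-x^\ast\|_2^2$ from inside the proof of Theorem~\ref{stationary point}, is a genuine strengthening in the regime $a\ge\lambda\gamma_m$ (it even yields a global minimizer), and your explicit flagging of the degenerate regime $a<\lambda\gamma_m$ exposes a gap that the paper's one-line proof simply leaves unaddressed. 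So your route is the paper's route, but handled more honestly: the paper does not supply the case analysis you propose, and your write-up would make the dependence on the tuning of $a$ versus $\lambda\gamma_m$ visible rather than hidden.
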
 
 
  \begin{proof}  By Theorem   \ref{stationary point}, $\nabla F(x^\ast;d) \geq 0$, thus  $x^\ast$ is an stationary point. By the descending property (\ref{descenting properties}), the stationary point is a local minimum. 
  \end{proof}

  We conclude  this section  by illustrating some examples. % for the choice of $\gamma_m$ to ensure the convexity and $a$-strong convexity of majorizer functions $F^M$, while the convexity condition (\ref{convexity condition1}) fails for the objective $F$. 
 First we have a notation. For a given matrix $A$, we denote by $\Sigma(A)$ the set of all singular values of matrix $A$. 
  
 \begin{example}[Tight frame] Assume that the rows of matrix $A$ form a tight frame with frame constant $C$. Then    $A^TA=CI$   and  $\Sigma(A):=\{C\}$. (When $C=1$,  the rows of matrix $A$ form a normalized tight frame, also known as Parseval frame.)       Let $\alpha$ and $\lambda$ are given such that    $\alpha>C\lambda^{-1}$.   Then the sufficient convexity  condition (\ref{convexity condition1}) fails for  $F$ and the function $F$ may have no local minimum.  
 \end{example}

In the following   example  we present a  positive lower bound for $\gamma_m$ for which the convexity condition (\ref{convexity condition})  holds for the majorizers.   
   
 \begin{example} Assume that the convexity condition (\ref{convexity condition1}) fails. 
 Thus, 
    for some   
  $\sigma\in \Sigma(A)$ we must have  $\alpha >  \frac{\sigma}{\lambda}$.  This implies that for smallest singular value $\sigma_0$ we also have $\alpha > \frac{\sigma_0}{\lambda}$. 
  Define $c:=  \frac{\lambda\alpha-\sigma_0}{2\lambda}$. The constant $c$ is positive  and with a straightforward computation one can show that for  all pairs $(\gamma_m,a)$   satisfying   
\begin{align}\label{lower bound for gamma}\notag
\gamma_m    \geq \frac{a}{\lambda} +c 
\end{align}
%where $c$ is a constant and given by $c:=  \frac{\lambda\alpha-\sigma_0}{2\lambda}$.  
%With a straightforward computation one can show that for  $\gamma_m$ and $a$ satisfying  (\ref{lower bound for gamma}) 
the  $a$-strong convexity condition   
   holds for 
  surregators $F^M$. 
 %When $a=0$, the strong convexity turns into convexity and for $\gamma_m$ with 
  %  $$\gamma_m\geq   c$$ 
% the convexity condition  (\ref{convexity condition}) holds.  
 The strict convexity also holds when the inequality is strict. 
 \end{example}

\end{document}